\newtheorem{sat}{Theorem}[section]		
\newtheorem{lem}[sat]{Lemma}
\newtheorem{kor}[sat]{Corollary}			
\newtheorem{prop}[sat]{Proposition}
\newtheorem*{defi*}{Definition}			
\newtheorem*{bei*}{Example}
\newtheorem*{sat*}{Theorem}				
\newtheorem*{kor*}{Corollary}
\newtheorem*{rmk*}{Remark}				
\newtheorem*{quest*}{Question}
\let\ssection=\section
\renewcommand{\section}{\setcounter{equation}{0}\ssection}
\newtheorem*{namedtheorem}{\theoremname}
\newcommand{\theoremname}{testing}
\theoremstyle{remark}
\newtheorem*{bem}{Remark}
\newtheorem*{namedtheoremr}{\theoremnamer}
\newcommand{\theoremnamer}{testing}
			\newcommand{\BH}{\mathbb H}
\newcommand{\BR}{\mathbb R}			
\newcommand{\BS}{\mathbb S}			\newcommand{\BZ}{\mathbb Z}
\newcommand{\BF}{\mathbb F}
		\newcommand{\CF}{\mathcal F}
\newcommand{\CG}{\mathcal G}		\newcommand{\CH}{\mathcal H}
\newcommand{\CS}{\mathcal S}
\newcommand{\actson}{\curvearrowright}
\newcommand{\D}{\partial}
\DeclareMathOperator{\Diff}{Diff}	
\DeclareMathOperator{\Id}{Id}		
\DeclareMathOperator{\Ker}{Ker}
\newcommand{\comment}[1]{}
\DeclareMathOperator{\Stab}{Stab}
\newcommand{\fsubd}{\mathrel{{\scriptstyle\searrow}\kern-1ex^d\kern0.5ex}}
\newcommand{\bsubd}{\mathrel{{\scriptstyle\swarrow}\kern-1.6ex^d\kern0.8ex}}
\renewcommand{\epsilon}{\varepsilon}
\renewcommand{\le}{\leqslant}
\renewcommand{\ge}{\geqslant}
\renewcommand{\emptyset}{\varnothing}
\begin{document}

\title[]{Surface groups of diffeomorphisms of the interval}
\author{Ludovic Marquis}
\address{IRMAR, Universit\'e de Rennes 1}
\email{ludovic.marquis@univ-rennes1.fr}
\author{Juan Souto}
\address{IRMAR, Universit\'e de Rennes 1}
\email{juan.souto@univ-rennes1.fr}

\begin{abstract}
We prove that the group of diffeomorphisms of the interval $[0,1]$ contains surface groups whose action on $(0,1)$ has no global fix point and such that only countably many points of the interval $(0,1)$ have non-trivial stabiliser.
\end{abstract}

\maketitle

\section{Introduction}

The goal of this paper is to exhibit surface groups in the group $\Diff([0,1])$ of diffeomorphisms of the interval. To be precise, let
$$\Diff([0,1])=\{\phi\in\Diff_\infty(\BR)\vert\phi(t)=t\text{ for all }t\notin(0,1)\}$$
be the group of $C^{\infty}$-diffeomorphisms of the real line which are identity outside of the unit interval. Equivalently, $\Diff([0,1])$ is the group of smooth diffeomorphisms of the interval which are infinitely tangent to the identity at its endpoints. 

Let $\BF_2$ be the non-abelian free group in two generators. Since $\BF_2$ acts smoothly on the closed interval one can construct, considering a countable collection of disjoint subintervals of $[0,1]$, subgroups of $\Diff([0,1])$ isomorphic to the direct product of countably many free groups. As was noted by Baik, Kim and Koberda \cite{BKK}, this implies in turn that $\Diff([0,1])$ contains subgroups isomorphic to every fully residually free group, and thus in particular fundamental groups of closed surfaces. However, the dynamics of the so-obtained actions on the interval have many rather unpleasant properties. For instance, there are numerous global fixed points, there are no points with trivial stabiliser, and the action is not topologically transitive. Our goal is to prove that there also nicer surfaces groups in $\Diff([0,1])$:

\begin{sat}\label{main}
There are subgroups of $\Diff([0,1])$ isomorphic to the fundamental group of a closed surface of genus $2$ whose action on $(0,1)$ has no global fix point, is topologically transitive, and such that only countably many points of the interval $(0,1)$ have non-trivial stabiliser.
\end{sat}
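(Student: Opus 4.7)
The natural place to start is a Fuchsian representation $\rho\colon\pi_1(\Sigma_2)\to\PSL_2(\BR)$, which gives a real-analytic, minimal action of the surface group on $S^1=\partial\BH^2$: every non-trivial element is loxodromic with exactly two fixed points, so the set of points with non-trivial stabiliser is the countable set of axis endpoints, and minimality immediately provides topological transitivity and the absence of global fixed points. All the dynamical properties demanded by the theorem are already visible on $S^1$; the entire difficulty is to transfer this ``model'' picture from the circle to the interval while staying in $\Diff([0,1])$.

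The strategy I would pursue is a controlled blow-up of a single $\pi_1(\Sigma_2)$-orbit. Pick a base point $p_0\in S^1$ with trivial stabiliser and dense orbit (a generic choice), and replace each $q\in\pi_1(\Sigma_2)\cdot p_0$ by a closed interval $I_q$ of some length $\ell_q$, with the $\ell_q$ summable. The resulting blown-up space $\tilde S^1$ is topologically a circle, and cutting at any non-orbit point with trivial stabiliser produces a copy of $[0,1]$. Each $\rho(\gamma)$ extends to $\tilde S^1$ by sending $I_q$ to $I_{\gamma q}$ through some chosen diffeomorphism.

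The heart of the proof, and where I expect the real work to lie, is arranging that the extended action is genuinely $C^\infty$ across the countably many blown-up points and infinitely tangent to the identity at the two endpoints of the resulting interval. A naive affine identification $I_q\to I_{\gamma q}$ would force the cocycle $\ell_{\gamma q}=\rho(\gamma)'(q)\,\ell_q$, whose summability is the convergence of the Poincar\'e series of $\rho$ at the critical exponent $s=1$ and therefore fails for a cocompact Fuchsian group. The way I would circumvent this is first to modify the smooth structure on $S^1$ near each orbit point by a local chart $\phi_q$, so that in the new structure every $\rho(\gamma)$ becomes infinitely tangent to the identity at the points of $\pi_1(\Sigma_2)\cdot p_0$; the condition that $\phi_q$ kills the $k$-th derivative of $\rho(\gamma)$ at $q$ is, at each order, a group cocycle that is a coboundary because $\pi_1(\Sigma_2)$ acts freely on the orbit, so the $\phi_q$ can be chosen inductively in $k$. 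In this modified smooth structure one is free to take the $\ell_q$ to be any positive summable sequence and to define each map $I_q\to I_{\gamma q}$ as a rescaled copy of a single fixed model diffeomorphism of $[0,1]$ that is tangent to the identity to infinite order at both endpoints; the resulting bijections of $\tilde S^1$ then verifiably extend to genuine $C^\infty$ diffeomorphisms.

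Once the construction is in place, the properties announced in the theorem follow from those of $\rho$ via the equivariant collapse $\tilde S^1\to S^1$. Topological transitivity and the absence of global fixed points in $(0,1)$ lift directly from minimality of the Fuchsian action. The set of points of $(0,1)$ with non-trivial stabiliser is the union of the (countable) orbit of axis endpoints and the (countable) collection of endpoints of the intervals $I_q$, hence still countable. Faithfulness of the resulting homomorphism $\pi_1(\Sigma_2)\to\Diff([0,1])$ is automatic because the action on $\tilde S^1$ is equivariantly semi-conjugate to the faithful action on $S^1$, so the genuine content of the proof is precisely the smooth blow-up just sketched.
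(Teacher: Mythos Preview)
Your approach breaks at the final step. To land in $\Diff([0,1])$ you must produce an action fixing both endpoints, so the point at which you cut $\tilde S^1$ has to be a \emph{global} fixed point of the action. But the Fuchsian action of $\pi_1(\Sigma_2)$ on $S^1$ is minimal and has no global fixed point, and blowing up a free orbit cannot create one: if $x\in\tilde S^1$ were fixed by every $\gamma$, its image under the collapse $\tilde S^1\to S^1$ would be as well. Cutting at a point $x$ with $\gamma x\neq x$ for some $\gamma$ simply does not yield a self-map of $\tilde S^1\setminus\{x\}$, let alone one extending to a homeomorphism of $[0,1]$ fixing the endpoints. Passing to the universal cover instead is equally blocked: the Euler number of a Fuchsian representation in genus $2$ is $\pm 2\neq 0$, so the circle action does not lift to $\BR$. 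There is a second, more technical gap in the blow-up itself. The orbit at which you insert intervals is dense in $S^1$, so ``modifying the smooth structure near each orbit point by a local chart $\phi_q$'' cannot literally mean choosing charts on disjoint neighbourhoods. Your coboundary argument is correct at the level of formal jets, but the genuinely hard step in any Denjoy-type construction is $C^k$ regularity of the blown-up map at the \emph{accumulation} points of the inserted intervals, and here every non-orbit point is such an accumulation point; matching jets at the endpoints of each $I_q$ is necessary but far from sufficient for this.

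For contrast, the paper never goes near the boundary circle. It first constructs a chain $\pi_1(\Sigma)=\Gamma_0\supset\Gamma_1\supset\cdots$ with $\Gamma_i/\Gamma_{i+1}\simeq\BZ$ and $\bigcap_i\Gamma_i=\{\Id\}$, each step coming from algebraic intersection with a simple closed curve on the cover $\BH^2/\Gamma_i$. It then builds $\rho_n:\pi_1(\Sigma)\to\Diff([0,1])$ inductively as holonomies: the foliation of $\Sigma\times\BR$ attached to $\rho_{n-1}$ contains a trivially foliated block $\BH^2/\Gamma_{n-1}\times I_{n-1}$, and one perturbs the foliation inside that block along the compactly supported curve $c_{n-1}$. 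The perturbations are made $C^n$-small so that $\rho=\lim\rho_n$ exists in $\Diff([0,1])$; the endpoint difficulty that sinks your approach never arises because the action is built on the interval from the start.
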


Recall that a group action is topologically transitive if it has a dense orbit. Observe also that in some sense the Theorem \ref{main} is optimal from the point of view of the quantity of fixed points: H\"older's theorem \cite{Navas-book,Deroin-Navas-Rivas,Farb-Franks} implies that a group which acts by homeomorphism freely on an interval is abelian. Finally, lets us note that in Theorem \ref{main}, we consider surfaces of genus 2 just for the sake of concreteness. This is moreover no loss of generality because the fundamental group of a surface of genus $2$ contains the fundamental group of a surface of genus $g$ for all $g\ge 2$. 

Note now that Theorem \ref{main} admits an interpretation in terms of 3-manifolds. In fact, to every orientation preserving action $\pi_1(\Sigma)\actson\BR$ of a surface group on the real line one can associate a foliation on $\Sigma\times\BR$ as follows. Let $\tilde\Sigma$ be the universal cover of $\Sigma$, let $\pi_1(\Sigma)$ acts on it via deck-transformations, and consider the product action
$$\pi_1(\Sigma)\actson\tilde\Sigma\times\BR,\ \ \gamma\cdot(z,t)\mapsto(\gamma(z),\gamma(t)).$$
The quotient manifold is diffeomorphic to $\Sigma\times\BR$. Moreover, since this action preserves the foliation of $\tilde\Sigma\times\BR$ whose leaves are the planes $\tilde\Sigma\times\{t\}$, it follows that the quotient is naturally endowed with a foliation. Since standard generators of the groups constructed to prove Theorem \ref{main} can be chosen to be as close to the identity as one wishes, one gets for those groups that the obtained foliation is close to the trivial foliation with leaves $\Sigma\times\{t\}$. Moreover, since the action is trivial outside of the interval $[0,1]$, it follows that the obtained foliation is trivial outside of the compact set $\Sigma\times[0,1]$. Altogether we have:

\begin{kor}\label{kor-3mf foliation}
The trivial foliation of $\Sigma\times\BR$ can be smoothly perturbed within the compact set $C=\Sigma\times[0,1]$ so that $C$ is saturated and such that all but countably many leaves in $C$ are simply connected.\qed
\end{kor}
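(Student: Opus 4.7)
The plan is to deduce the corollary directly from Theorem \ref{main} by unpacking the suspension construction already sketched in the text above. Concretely, I take the representation $\rho\colon\pi_1(\Sigma)\to\Diff([0,1])$ produced by Theorem \ref{main} (with $\Sigma$ of genus $2$), extend every element by the identity outside of $[0,1]$ to view it as an element of $\Diff_\infty(\BR)$, and form the diagonal action $\gamma\cdot(z,t)=(\gamma z,\rho(\gamma)t)$ on $\tilde\Sigma\times\BR$. Because the action on the first factor is free and properly discontinuous and $\rho$ is smooth, the quotient is diffeomorphic to $\Sigma\times\BR$ and the horizontal foliation with leaves $\tilde\Sigma\times\{t\}$ descends to a smooth foliation $\CF$ of $\Sigma\times\BR$.

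The first point to verify is that $\CF$ is a perturbation of the trivial foliation supported in $C=\Sigma\times[0,1]$. This is automatic: outside $[0,1]$ the representation $\rho$ is trivial, so over $\BR\setminus[0,1]$ the quotient of the horizontal foliation agrees with the trivial slice foliation. In particular $C$ is saturated by $\CF$. The $C^\infty$-closeness to the trivial foliation then follows from the fact, asserted in the discussion preceding the corollary, that the standard generators of $\rho$ can be chosen arbitrarily close to the identity.

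The second point is the statement about leaves. The leaf through the image of $(z,t)$ is the image of $\tilde\Sigma\times\{t\}$ in the quotient, and two points $(z_1,t)$ and $(z_2,t)$ project to the same point if and only if there is $\gamma\in\pi_1(\Sigma)$ with $\gamma z_1=z_2$ and $\rho(\gamma)t=t$; thus the leaf is naturally identified with $\tilde\Sigma/\Stab_\rho(t)$. Since $\tilde\Sigma$ is the universal cover of $\Sigma$ and $\pi_1(\Sigma)$ is torsion-free, this leaf is simply connected precisely when $\Stab_\rho(t)$ is trivial. Hence non-simply-connected leaves in $C$ correspond to $\rho$-orbits meeting $[0,1]$ whose stabilizer is non-trivial. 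By Theorem \ref{main} there are only countably many $t\in(0,1)$ with non-trivial stabilizer, and since $\pi_1(\Sigma)$-orbits are countable this accounts for at most countably many leaves. Adjoining the two orbits $\{0\}$ and $\{1\}$, each stabilized by all of $\pi_1(\Sigma)$ and giving a leaf diffeomorphic to $\Sigma$, yields at most countably many non-simply-connected leaves in $C$.

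There is no genuine obstacle here: the corollary is essentially a dictionary between the dynamical conclusions of Theorem \ref{main} and geometric properties of the associated suspension foliation. The only subtlety worth recording is the use of torsion-freeness of $\pi_1(\Sigma)$ to convert triviality of the stabilizer of $t\in\BR$ into simple-connectedness of the corresponding leaf of $\CF$.
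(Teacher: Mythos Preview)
Your proposal is correct and is precisely the argument the paper has in mind: the corollary carries a \qed in its statement because it is meant to follow from the suspension dictionary sketched just before it, together with the remark after the injectivity proof that $\epsilon$ in Proposition~\ref{prop-approx} can be taken arbitrarily small. Your write-up simply makes that dictionary explicit---in particular the identification of the leaf through $t$ with $\tilde\Sigma/\Stab_\rho(t)$ and the use of torsion-freeness---which the paper leaves to the reader (cf.\ the remark at the end of Section~\ref{sec-review}).
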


The proof of Theorem \ref{main} would be much easier (in fact, the result is basically folklore among people interested in these questions) if we were considering the group of homeomorphisms of the interval instead of the group of diffeomorphisms. In particular, the existence of such foliations as provided by Corollary \ref{kor-3mf foliation} is known in the topological category. In particular, it was already known, using a theorem of Calegari \cite{Calegari-smooth leaves}, that there were such foliations where each leaf is smooth. The difference between this statement and that of Corollary \ref{kor-3mf foliation} is that we are now also ensuring that the foliation is transversely smooth. These might look as a small difference but in general it is not. For example, if $S$ is a smooth negatively curved surface then the weak stable manifolds of the geodesic flow on $T^1S$ are always smooth, but the weak stable foliation is smooth if and only if the metric has constant curvature \cite{Ghys87}. Note that $T^1S$ is a 3-manifold and that the weak stable foliation has codimension 1. 

As we just mentioned, the main issue in the proof of Theorem \ref{main} is that we insist on the smoothness of the action. In fact, continuous and smooth actions on the interval are rather different. For example, a finitely generated group acts effectively by homeomorphisms on the interval if and only if it is left-orderable \cite{Deroin-Navas-Rivas} (and surface groups have a Cantor set worth of orders \cite{ABR}), but this condition is far from ensuring the existence of smooth actions. For example, Thurston's stability theorem \cite{Thurston} asserts that the group of $C^1$-diffeomorphisms on the interval is locally indicable, meaning that any (non-trivial) finitely generated subgroup surjects on $\BZ$. In terms of orders, this means that every group acting effectively by $C^1$-diffeomorphisms on the interval admits what is called a $C$-order (see \cite{Deroin-Navas-Rivas} for the relation between orders on groups and one-dimensional dynamics). However, there are many examples of locally indicable groups which do not act smoothly on the interval. For example, Navas \cite{Navas} proved that there are semi-direct products $\BZ^2\rtimes\BF_2$ which do not admit effective $C^1$-actions on $[0,1]$. Other examples of this phenomenon are due to Calegari \cite{Calegari} and Bonatti-Monteverde-Navas-Rivas \cite{BMNR}.

The reader could be by now thinking that those comments are all nice and well, but that surface groups are possibly the most flexible groups after free groups, and that there are many instances in which it is known that if a group contains a free group then it also contains a surface group. We agree. For example, using that surface groups are limits of free groups, it was proved in \cite{BGSS} that a locally compact group $G$ which contains a non-discrete free group, also contains a surface group. There are several ways to present the argument from \cite{BGSS} but the simplest form of the argument, and the one which is most prone to generalisation, goes as follows. First consider the group $\pi_1(\Sigma)$ as an amalgamated product
$$\pi_1(\Sigma)=\BF_2*_\BZ\BF_2$$
where the amalgamation is given by identifying the commutators $[a,b]=[a',b']$ for the free bases $\{a,b\}$ and $\{a',b'\}$ of the first and second copies of $\BF_2$ respectively. Now, take $a,b\in G$ which generate a free group and such that there is a 1-parameter subgroup $(g_t)\subset G$ with $[a,b]=g_1$. Now, definitively if $G$ is a Lie group but also in all cases that come to mind to the authors, one has that almost all representations
$$\rho_t:\pi_1(\Sigma)\to G,\ \ \rho_t(a)=a,\ \rho_t(b)=b,\ \rho_t(a')=g_ta g_t^{-1},\ \rho_t(b')=g_tbg_t^{-1}$$
are faithful. This is a very flexible argument. But we do not know how to make it work if $G=\Diff_\infty([0,1])$ because we do not know how to ensure that the commutator $[a,b]$ is part of a flow. In fact, centralisers of generic diffeomorphisms are cyclic groups \cite{Bonatti-Crovisier-Wilkinson}.

In fact, the argument used to prove Theorem \ref{main} is of a very different nature. Basically, if $\Sigma$ is our closed surface, we will obtain the desired homomorphism
$$\rho:\pi_1(\Sigma)\to\Diff_\infty([0,1])$$
as a limit $\rho=\lim\rho_n$. Here, the approximating homomorphisms $\rho_n$ will be constructed inductively in such a way that at each time the corresponding kernel is contained in a certain subgroup $\Gamma_n$ of $\pi_1(\Sigma)$ satisfying
$$\pi_1(\Sigma)=\Gamma_0\triangleright\Gamma_1\triangleright\Gamma_2\triangleright\ldots\text{ with }\cap_{n=1}^\infty\Gamma_n=\{ \Id \}\text{ and }\Gamma_{n-1}/\Gamma_n=\BZ\text{ for all }n.$$
We will obtain $\rho_n$ as the holonomy of a certain perturbation of the foliation of $\Sigma\times\BR$ associated to $\rho_{n-1}$. The basic two ingredients of the construction of this perturbation are
\begin{itemize}
\item that the foliation associated to $\rho_{n-1}$ contains a trivially foliated product $\BH^2/\Gamma_{n-1}\times I_{n-1}$ for some interval $I_{n-1}\subset[0,1]$, and 
\item that the cohomology class in $H^1(\BH^2/\Gamma_{n-1};\BZ)$ given by the isomorphism $\Gamma_{n-1}/\Gamma_n=\BZ$ is compactly supported.
\end{itemize}
It is this last point the one which makes us wonder if groups such as the fundamental group of a hyperbolic 3-manifold $M$ fibering over the circle and with say $H_1(M;\BZ)=\BZ$ can be subgroups of $\Diff_\infty([0,1])$.
\medskip

The paper is organised as follows. In section \ref{sec-algebra} we construct the filtration of $\pi_1(\Sigma)$ mentioned above. Then, in section \ref{sec-proof of main} we prove Theorem \ref{main} assuming Proposition \ref{prop-approx}, the key technical step in this paper. In section \ref{sec-review} we recall the dictionary between foliations and their holonomies. This dictionary is the key to prove Proposition \ref{prop-approx} in section \ref{sec-meat}.
\medskip

\noindent{\bf Acknowledgements.} We would like to thank Bill Breslin and specially Sang-hyun Kim for very interesting conversations on this topic. We also thank the referee for his or her useful report.

\section{A usefull unscrewing of surface groups}\label{sec-algebra}

Let from now on $\Sigma$ be a closed hyperbolic surface of genus $2$ and identify its universal cover with the hyperbolic plane $\BH^2$. Choose a base point $*\in\BH^2$ and denote again by $*$ its image under the cover $\BH^2\to\Sigma$. In the same way, if $\Gamma\subset\pi_1(\Sigma,*)$ is any subgroup then we denote by $*$ the projection of the base point of $\BH^2$ to $\BH^2/\Gamma$. Here we let $\pi_1(\Sigma,*)$ act on $\BH^2$ via deck-transformations. Finally, even if most of the time we do not make this explicit, curves in $\Sigma$ and its covers will be assumed to be oriented. In any case, the chosen orientation will basically play no role in the arguments.

Anyways, in this section we construct a certain decreasing filtration of the surface group $\pi_1(\Sigma,*)$. More precisely we prove:

\begin{prop}\label{prop-filtration}
There is a decreasing sequence of nested subgroups 
$$\pi_1(\Sigma,*)=\Gamma_0\supset\Gamma_1\supset\Gamma_2\supset\dots$$
of the fundamental group of $\Sigma$ satisfying 
$$\cap_i\Gamma_i=\{\Id\}$$ 
and such that for all $i$ we have
$$\Gamma_{i+1}=\{\gamma\in\Gamma_i\vert\langle\gamma,c_i\rangle=0\}$$
where $c_i\subset\BH^2/\Gamma_i$ is an (oriented) simple closed curve and where $\langle\cdot,\cdot\rangle$ is the algebraic intersection number on $\BH^2/\Gamma_i$.
\end{prop}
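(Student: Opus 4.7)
The plan is to construct the filtration inductively so as to kill every non-trivial element of $\pi_1(\Sigma,*)$ one at a time. Fix an enumeration $g_1,g_2,\ldots$ of $\pi_1(\Sigma,*)\setminus\{\Id\}$ and maintain, as invariant after the $n$-th stage, that a finite initial segment $\Gamma_0\supset\cdots\supset\Gamma_{k_n}$ of the desired form together with simple closed curves $c_0,\ldots,c_{k_n-1}$ has been produced, and that $g_1,\ldots,g_n\notin\Gamma_{k_n}$. Because the filtration is decreasing, an element once pushed outside stays outside, so advancing from stage $n$ to stage $n+1$ amounts to extending the filtration finitely further until $g_{n+1}$ also escapes the current subgroup (if it is not outside already). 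The resulting infinite filtration then automatically satisfies $\bigcap_i\Gamma_i=\{\Id\}$.

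Everything therefore reduces to the following claim: for any $\Gamma$ appearing in such a partial filtration and any non-trivial $g\in\Gamma$, one can produce a finite extension $\Gamma=\Lambda_0\supset\Lambda_1\supset\cdots\supset\Lambda_m$ of the required form with $g\notin\Lambda_m$. Set $S=\BH^2/\Gamma$ and distinguish two cases according to the class $[g]\in H_1(S;\BZ)$. When $[g]\ne 0$, the plan is to produce a non-separating simple closed curve $c\subset S$ with $\langle g,c\rangle\ne 0$ and to set $\Lambda_1=\{\gamma\in\Gamma\mid\langle\gamma,c\rangle=0\}$; the existence of such a $c$ comes from a Poincar\'e--Lefschetz duality argument applied to a sufficiently large compact subsurface $S_0\subset S$ containing the loop $g$, chosen large enough that the intersection form on $H_1(S_0;\BZ)$ is non-degenerate on the class $[g]$. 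Here I use that every $S=\BH^2/\Gamma$ appearing in the filtration is of infinite genus, which provides room to enlarge $S_0$ as needed.

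The harder case is $[g]=0$ in $H_1(S;\BZ)$, i.e.~$g$ is a product of commutators in $\Gamma$. Here the idea is to pass to a $\BZ$-cover on which the lift of $g$ becomes homologically visible, reducing to the previous case. Concretely, writing $g=[\alpha_1,\beta_1]\cdots[\alpha_\ell,\beta_\ell]$ and choosing a non-separating simple closed curve $c\subset S$ with $\langle\alpha_1,c\rangle\ne 0$, a direct computation of the lift $\hat g$ to the associated $\BZ$-cover $\hat S\to S$ shows that its class in $H_1(\hat S;\BZ)$ is a non-trivial $\BZ$-linear combination of differences $T_\ast^{k_i}[\hat\beta_i]-[\hat\beta_i]$ (and analogous mixed terms when $\langle\beta_i,c\rangle\ne 0$), where $T$ is the deck translation; a generic choice of $c$ makes this combination non-zero, and one is reduced to the first case applied to $\hat g\in\pi_1(\hat S)$. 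If the combination happens to vanish, one iterates, using that $\pi_1(\Sigma)$, being fully residually free, is residually solvable, so that $g$ cannot lie in every term of the derived series and its commutator depth strictly drops after finitely many refinements.

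The main obstacle will be making this last case genuinely rigorous: at each intermediate stage one must verify both that a non-separating simple closed curve with the prescribed intersection behaviour exists on the (infinite-type) cover, and that the induction on commutator depth terminates after finitely many cover refinements. Both points ultimately reduce to combining the infinite-genus structure of the intermediate covers, which guarantees an abundant supply of non-separating simple closed curves available to realize the $\BZ$-steps, with the residual solvability of $\pi_1(\Sigma)$, which bounds the number of refinements needed to expose any fixed element in homology.
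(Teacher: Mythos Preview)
Your dichotomy in Case~1 versus Case~2 is the wrong one, and Case~1 has a genuine gap. On an open surface $S$ the intersection pairing on $H_1(S;\BZ)$ is typically degenerate: there exist classes $[g]\ne 0$ with $\langle g,c\rangle=0$ for every closed curve $c$. A concrete example already appears after the very first step of the filtration. Take $c_0\subset\Sigma$ non-separating and let $S=\BH^2/\Gamma_1$ be the associated infinite cyclic cover. Any lift $\tilde c_0\subset S$ is a simple closed curve that \emph{separates} $S$ (into two infinite-genus pieces), so $\langle\tilde c_0,\gamma\rangle=0$ for every $\gamma$. On the other hand $[\tilde c_0]\ne 0$ in $H_1(S;\BZ)$, because neither side is compact and hence $\tilde c_0$ does not bound a compact $2$-chain. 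Your proposed fix of enlarging the compact subsurface $S_0$ cannot help here: since $\tilde c_0$ separates $S$, it separates every compact $S_0$ containing it, and in $H_1(S_0;\BZ)$ its class is always a sum of boundary classes, i.e.\ lies in the radical of the intersection form of $S_0$ no matter how large $S_0$ is. So for $g=\tilde c_0$ your Case~1 argument produces no curve $c$ at all.

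Case~2 is also not rigorous as written. Each $\Gamma_i$ for $i\ge 1$ is free of infinite rank, so you cannot reach $[\Gamma_i,\Gamma_i]$ by finitely many $\BZ$-quotients; invoking residual solvability of $\pi_1(\Sigma)$ therefore gives no a~priori bound on the number of refinements. The sentence ``a generic choice of $c$ makes this combination non-zero'' is an assertion, not an argument, and you immediately concede it may fail; the fallback ``commutator depth strictly drops'' is exactly what needs proof.

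The paper's proof avoids these homological subtleties by a geometric device: it orders the non-trivial elements of $\pi_1(\Sigma)$ by translation length, so that at each stage the smallest surviving element $\eta$ is represented by a \emph{simple} loop in $\BH^2/\Gamma_n$. Then the correct dichotomy is non-separating versus separating. In the non-separating case one step suffices. In the separating case the paper uses an auxiliary lemma---that ``genus all over the place'' is inherited by the $\BZ$-covers in question---to find handle pairs $(\alpha_i,\beta_i)$ on each side of $\eta$, and from these manufactures by arc-surgery two curves $c_n,c_{n+1}$ with the property that after passing to the cover dual to $c_n$ the lift of $\eta$ becomes non-separating and meets (a lift of) $c_{n+1}$ once. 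Thus $\eta\notin\Gamma_{n+2}$, and two steps always suffice. This explicit two-step construction is the missing idea in your approach.
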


Before launching the proof we need a definition and a simple fact. We will say that a hyperbolic surface $X$ has {\em genus all over the place} if there is some $R>0$ such that for all $x\in X$ there are  pair of simple closed curves $\alpha_x$ and $\beta_x$ contained in the ball of radius $R$ centred at $x$ and intersecting transversally exactly once. In symbols this means that 
\begin{equation}\label{eq-genus all over the place}
\langle\alpha_x,\beta_x\rangle=1\text{ and }\alpha_x,\beta_x\subset B^X(x,R).
\end{equation}
To prove Proposition \ref{prop-filtration} we will use the following fact:

\begin{lem}\label{lem-genus all over}
Let $X=\BH^2/\pi_1(X)$ be a connected hyperbolic surface, $c\subset X$ a simple closed curve, and $\Gamma=\{\gamma\in\pi_1(X)\vert\langle c,\gamma\rangle=0\}$. If $X$ has genus all over the place, then so does $Y=\BH^2/\Gamma$ as well.
\end{lem}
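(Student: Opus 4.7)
If $c$ is separating then $\langle c,\gamma\rangle=0$ for every $\gamma\in\pi_1(X)$, so $\Gamma=\pi_1(X)$, $Y=X$, and there is nothing to prove. I therefore assume $c$ is non-separating, so that $\phi:=\langle c,\cdot\rangle\colon\pi_1(X)\to\BZ$ is onto and $p\colon Y\to X$ is an infinite cyclic cover.

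The central observation is that the cover $p$ trivialises on the complement of $c$: any loop contained in $X\setminus c$ has zero geometric, hence zero algebraic, intersection with $c$, so it belongs to $\Gamma$; consequently $p^{-1}(X\setminus c)$ is a disjoint union of $\BZ$ pairwise isometric copies of $X\setminus c$. In particular, any one-holed torus $T\subset X$ disjoint from $c$ lifts to $\BZ$ isometric copies in $Y$, each of them containing a pair of simple closed curves meeting transversally once. Hence, given $y\in Y$ with $x=p(y)$, it is enough to exhibit a one-holed torus $T\subset X\setminus c$ at uniformly bounded distance from $x$: its lift through $y$ then furnishes the desired pair inside a ball of bounded radius about $y$ in $Y$.

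To produce such a $T$, I would find a point $x'\in X$ with $d_X(x,x')\le D$ and $d_X(x',c)>R$, where $D=D(R,c)$ is a constant independent of $x$. Then $B^X(x',R)\cap c=\emptyset$, and applying \emph{genus all over the place} at $x'$ yields a pair of simple closed curves $\alpha,\beta\subset B^X(x',R)\subset X\setminus c$ meeting transversally once; these span the required one-holed torus. Lifting $\alpha$ and $\beta$ to the unique sheet of $p^{-1}(X\setminus c)$ meeting a controlled neighbourhood of $y$ yields simple closed curves in $Y$ contained in $B^Y(y,R_Y)$ for some $R_Y$ depending only on $R$ and $c$.

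The main obstacle is establishing the existence of $D$, i.e.\ that the set $\{z\in X\colon d_X(z,c)>R\}$ is $D$-dense in $X$. Intuitively the tubular neighbourhood $N_R(c)$ is a bounded region --- it has area at most $2L_c\sinh R$, where $L_c$ is the length of the geodesic representative of $c$, and is contained in a metric ball of radius $\le L_c/2+R$ about any point of $c$ --- so from any point of $N_R(c)$ one should be able to escape to its complement within distance controlled by this diameter. The hypothesis \emph{genus all over the place} enters precisely here to rule out the pathology $N_R(c)=X$ and, more generally, to exclude $N_R(c)$ containing metric balls of arbitrarily large radius: this uses the lower bound of $2\pi$ on the area of every $R$-ball in $X$, combined with a packing argument against the area budget $2L_c\sinh R$ of $N_R(c)$.
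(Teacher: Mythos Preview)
Your strategy---trivialise the cover over $X\setminus c$ and lift pairs of curves found at distance $>R$ from $c$---is essentially what the paper does in the non-compact case. The gap is in your justification that such points are $D$-dense in $X$, and it is a genuine one.

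The area argument you sketch does not work. The lower bound ``area of every $R$-ball $\ge 2\pi$'' is not a consequence of genus all over the place: the curves $\alpha_x,\beta_x$ lie in $B^X(x,R)$, but a regular neighbourhood of $\alpha_x\cup\beta_x$ can have arbitrarily small area, and the one-holed torus with geodesic boundary that they fill need not be contained in the ball. Even granting such a lower bound, a packing count against $\area(N_R(c))\le 2L_c\sinh R$ bounds only the number of \emph{disjoint} $R$-balls inside $N_R(c)$; it says nothing about the radius of a single large ball contained in $N_R(c)$, which is what you need.

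More fundamentally, when $X$ is closed your approach cannot get started: every closed hyperbolic surface has genus all over the place with $R=\diam X$, and for that $R$ one has $N_R(c)=X$ for every curve $c$, so there are no points at distance $>R$ from $c$ at all. Nothing in the hypothesis prevents this. The paper disposes of the closed case by a separate, direct argument: if $X$ is closed of genus $g$, the cyclic cover $Y$ has a fundamental domain which is a surface of genus $g-1\ge 1$, and twice its diameter serves as the constant for $Y$. Only when $X$ is non-compact does the paper run an argument of your type, and there the needed density comes for free from compactness of $K=\overline{N_{R+1}(c)}$ in $X$ (so its preimage in $Y$ is cocompact for the deck $\BZ$-action), with no area estimates required.
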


\begin{proof}
First note that if $c$ is separating, then there is nothing to be proved because $X=Y$ in this case. We assume from now on that $c$ is non-separating.

Suppose first that $X$ is closed of genus $g$. The cover $Y\to X$ is cyclic and admits a surface of genus $g-1>0$ as a fundamental domain for this $\BZ$ action. So, $Y$ has genus all over the place with $R$ equal twice the diameter of the fundamental domain. We suppose from now on that $X$ is not closed.

The set $K=\{x\in X\vert d_X(x,c)\le R+1\}$ is compact in $X$. Let $\tilde K=\pi^{-1}(K)\subset Y$ be its preimage under the cover $\pi:Y\to X$ and let $\hat K$ be the union of $\tilde K$ and of all bounded connected components of $Y\setminus\tilde K$. Since the cover $\pi:Y\to X$ is normal (with deck-transformation group $\BZ$), it follows that $\tilde K$ has a compact fundamental domain under the action $\BZ\actson\tilde K\subset Y$. This implies then in turn that $\hat K$ also has a compact fundamental domain. It follows that there is some $R'$ such that for all $y\in Y$ there is $y'\in B^Y(y,R')$ such that $d_X(\pi(y'),c)\ge R+1$. Now, for any such $y'$ we have that the ball $B^X(\pi(y'),R)$ centred at its projection $\pi(y')$ and with radius $R$ is disjoint of $c$ and hence lifts isometrically to $Y$. In symbols this means that
$$B^Y(y',R) \underset{isometric}{\simeq} B^X(\pi(y'),R).$$
It follows that $B^Y(y',R)$ contains a pair of curves which meet transversally and exactly once. Since $B^Y(y',R)\subset B^Y(y,R'+R)$ we have thus proved that $Y$ has genus all over the place, as we had claimed.
\end{proof}

Armed with Lemma \ref{lem-genus all over} we are ready to prove Proposition \ref{prop-filtration}:

\begin{proof}[Proof of Proposition \ref{prop-filtration}]
We start by ordering the nontrivial elements of $\pi_1(\Sigma)=\Gamma_0$ by length, meaning that we choose a total order $\le$ satisfying $\ell_\Sigma(\gamma)\le\ell_\Sigma(\eta)$ for all $\gamma\le\eta$. Here $\ell_\Sigma$ is the hyperbolic length of the shortest loop of $\Sigma$ based at $*$ in the homotopy class relative to the base point $*$. Armed with this order we start the construction of the filtration $(\Gamma_i)$ and of the respective sequence of curves.

We will work inductively, starting with $\Gamma_0=\pi_1(\Sigma,*)$. Suppose that we have given $\Gamma_n$ and let $\eta$ be a smallest non-trivial element of $\Gamma_n$ with respect to the order $\le$. We will construct $(\Gamma_i)$ in such a way that $\eta\notin\Gamma_{n+2}$. Note that this suffices to show that $\cap\Gamma_i=\Id$.

Starting with the construction of $\Gamma_{n+1}$, note that the choice of $\eta$ as a smallest non-trivial element in $\Gamma_n$, ensures that $\eta$ is a shortest homotopically non-trivial loop in $X=\BH^2/\Gamma_n$ based at the base point $*$. In particular, $\eta$ is a simple loop, that is, without self-intersections. If $\eta$ is non-separating then let $c_n$ be a simple closed curve in $X$ which meets $\eta$ exactly once. This means that 
$$
\eta\notin\Gamma_{n+1}\stackrel{\text{def}}=\{\gamma\in\Gamma_n\vert\langle\gamma,c_n\rangle=0\}.
$$
We can thus suppose that $\eta$ separates $X$ into two components $X_1,X_2$. 

Note that if $X_1$ is compact, then it has positive genus because it has connected boundary $\eta$ and is a compact subsurface of a hyperbolic surface - in particular, $X_1$ contains a pair of simple closed curves $\alpha_1,\beta_1$ intersecting transversally exactly once. We claim that the same is true also if $X_1$ is not compact. To see that this is the case note first that, using Lemma \ref{lem-genus all over} inductively, we get that $X$ has genus all over the place. Let $R>0$ be as in the definition of having genus all over the place, choose $x_1\in X_1$ with $d_X(x_1,\eta)\ge R+1$ and pick one of the pair of simple closed curves $\alpha_1,\beta_1$ in $B^X(x_1,R)$ guaranteed by the definition of having genus all over the place. Reversing the roles of $X_1$ and $X_2$ we have then proved:
\medskip

\noindent{\bf Fact.} {\em Both components $X_1$ and $X_2$ of $X\setminus\eta$ contain a pair $\{ \alpha_i,\beta_i\}$ of simple closed curves which meet transversally and exactly once.\qed}
\medskip

\begin{figure}
\centering
\includegraphics[scale=0.5]{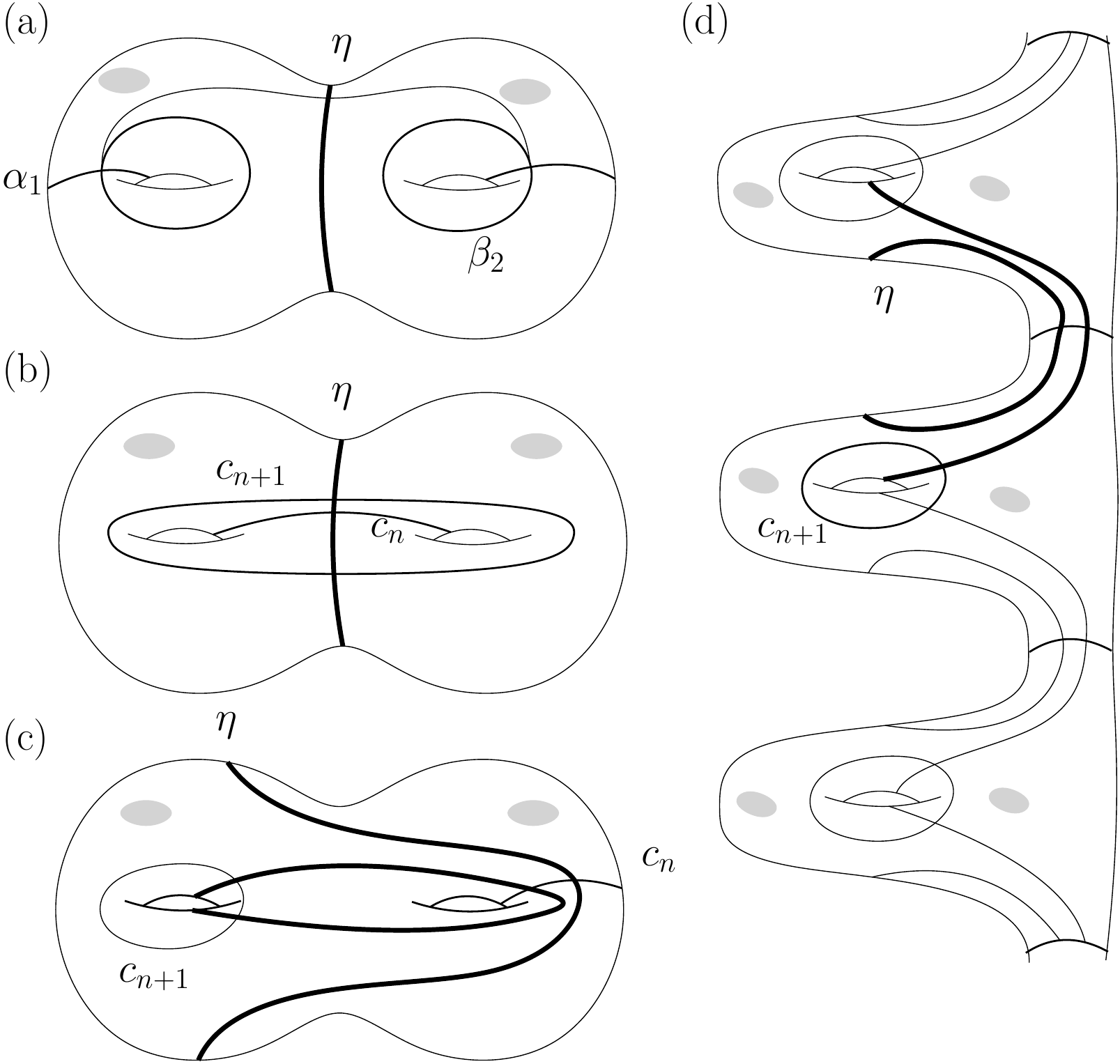}
\caption
{(a) We see the surface $Z\subset X$ (the grey islands denote the connected components of $X\setminus Z$), the curves $\eta,\alpha_1,\beta_1,\alpha_2,\beta_2$ and the arc $J$. (b) We see again $Z$ and $\eta$, and now also the curves $c_n$ and $c_{n+1}$. (c) We see again the same as in (b), but from a different point of view, that is after applying a mapping class. Finally, in (d) we see the cover $Y=\BH^2/\Gamma_n$ with lifts of $\eta$, $c_n$ and $c_{n+1}$, and with the homeomorphically lifted grey islands. The important point to note is that curves $\eta$ and $c_{n+1}$ in $Y$ intersect exactly once.}\label{bigfigure}
\end{figure}

Let $\alpha_1,\beta_1\subset X_1$ and $\alpha_2, \beta_2\subset X_2$ be the four curves provided by the fact, oriented in such a way that $\langle\alpha_i,\beta_i\rangle=1$ for $i=1,2$. Let also $J$ be an embedded arc in $X$ joining the points $\alpha_1\cap\beta_1$ and $\alpha_2\cap\beta_2$, whose interior is disjoint of the curves $\alpha_1,\beta_1,\alpha_2$ and $\beta_2$, and which meets $\eta$ exactly once (compare with (a) in figure \ref{bigfigure}). Let $c_n$ be the simple closed curve obtained from $\alpha_1$ and $\alpha_2$ by surgery along $J$. In other words, $c_n$ is the component of the boundary of a regular neighborhood of $\alpha_1\cup J\cup\alpha_2$ which is not isotopic to one of the $\alpha_i$. Similarly, let $c_{n+1}$ be the curve obtained from $\beta_1$ and $\beta_2$ via surgery along $J$ (compare with (b) in figure \ref{bigfigure}). Note that a regular neighborhood of the union of the curves $\alpha_1,\beta_1,\alpha_2,\beta_2,\eta$ and the arc $J$ is a subsurface $Z$ of $X$ homeomorphic to a surface of genus $2$ with $2$ boundary components.

\begin{bem}
Although we will not need it below, note that up to reversing the orientation of $\alpha_1$ and/or $\alpha_2$, and thus of $\beta_1$ and/or $\beta_2$, we might assume that $c_n$ is homologous to $\alpha_1+\alpha_2$, and that $c_{n+1}$ is homologous to $\beta_1-\beta_2$.
\end{bem}

Continuing with the proof, take now $\Gamma_{n+1}=\{\gamma\in\Gamma_n\vert\langle\gamma,c_n\rangle=0\}$ and note that, unfortunately, $\eta\in\Gamma_{n+1}$. This means that the loop $\eta$ in $X=\BH^2/\Gamma_n$ lifts to a loop in the cover $Y=\BH^2/\Gamma_{n+1}$ (see (d) in figure \ref{bigfigure} for a pictorial representation of $Y$). However, the so-obtained loop, which we again denote by $\eta$, does not separate $Y$. Indeed, it meets some lift of $c_{n+1}$ to $Y$ in a single point. Denoting again this lift by $c_{n+1}$ we have that
$$\eta\notin\Gamma_{n+2}\stackrel{\text{def}}=\{\gamma\in\Gamma_{n+1}=\pi_1(Y,*)\vert\langle\gamma,c_{n+1}\rangle=0\},$$
as we needed to prove.
%
%
\end{proof}

\section{Proof of the main theorem assuming the key step}\label{sec-proof of main}

The groups whose existence is claimed in Theorem \ref{main} will be constructed by a limiting process. The key step is the construction of a sequence of homomorphisms $\pi_1(\Sigma,*)\to\Diff([0,1])$ as follows. 

\begin{prop}\label{prop-approx}
Let $(\Gamma_n)$ be the sequence of groups provided by Proposition \ref{prop-filtration} and fix a finite symmetric generating set $\CS$ of $\pi_1(\Sigma,*)$.

For all $\epsilon>0$ there is a sequence of pairs $(\rho_n,I_n)$ where 
$$\rho_n:\pi_1(\Sigma,*)\to\Diff([0,1])$$
is a homomorphism and $I_n$ is an open subinterval of $[0,1]$, starting with the trivial homomorphism $\rho_0$ and with the interval $I_0=(0,1)$, and such that for all $n\ge 1 $ the following conditions are satisfied:
\begin{enumerate}
\item The closure of $I_n$ is contained in $I_{n-1}$, that is $\overline{I_n}\subset I_{n-1}$.

\item $\rho_n$ is $C^n$-close to $\rho_{n-1}$, meaning that 
$$\Vert\rho_n(\gamma)-\rho_{n-1}(\gamma)\Vert_{C^n}\le 10^{-n}\cdot\epsilon$$ 
for every $\gamma\in\CS$. Here, $\Vert f\Vert_{C^n}=\max_{i=0}^n\Vert f^{(i)}\Vert_\infty$ is the standard $C^n$-norm.

\item $\Stab_{\rho_n}(x)=\Gamma_n$ for all $x\in I_n$, where
$$\Stab_{\rho_n}(x)=\{\gamma\in\pi_1(\Sigma)\vert \rho_n(\gamma)(x)=x\}$$ 
is the stabiliser of $x$ with respect to the action of $\pi_1(\Sigma,*)$ on $[0,1]$ induced by $\rho_n$.

\item For all $\gamma\notin\Gamma_n$ we have $\rho_n(\gamma)(I_n)\cap I_n=\emptyset$. In particular, 
$$\D\overline I_n\subset K_n\stackrel{\text{def}}=  [ 0,1 ]\setminus\bigcup_{\gamma\in\pi_1(\Sigma,*)}\rho_n(\gamma)(I_n).$$

\item Finally, $\rho_n(\gamma)(x)=\rho_{n-1}(\gamma)(x)$ for all $\gamma\in\pi_1(\Sigma,*)$ and all $x\in K_{n-1}$. In particular, $K_{n-1}\subset K_n$.
\end{enumerate}
\end{prop}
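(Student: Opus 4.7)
The plan is to construct $(\rho_n, I_n)$ by induction on $n$, realising $\rho_n$ as the holonomy of a foliation of $\Sigma \times [0,1]$ obtained from the foliation associated to $\rho_{n-1}$ by a $C^n$-small smooth perturbation supported near the simple closed curve $c_{n-1} \subset \BH^2/\Gamma_{n-1}$ from Proposition \ref{prop-filtration}. The base case $n = 0$ is immediate: take $\rho_0$ to be the trivial homomorphism and $I_0 = (0,1)$, so $K_0 = \emptyset$ and conditions (1), (2) and (5) are vacuous while (3) and (4) hold because $\Gamma_0 = \pi_1(\Sigma,*)$.

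For the inductive step, suppose $(\rho_{n-1}, I_{n-1})$ has been built with the five properties. Conditions (3) and (4) at step $n-1$ say that the leaves of the $\rho_{n-1}$-foliation which meet $\{*\} \times I_{n-1}$ form a saturated open set that, as a foliated manifold, is canonically the trivially foliated product $U_{n-1} = Y_{n-1} \times I_{n-1}$, where $Y_{n-1} = \BH^2/\Gamma_{n-1}$. Using the dictionary of Section \ref{sec-review}, I would choose a thin annular tubular neighborhood $N \cong S^1 \times (-1,1)$ of $c_{n-1}$ in $Y_{n-1}$ and replace the trivial foliation on $N \times I_{n-1}$ by a Reeb-type ``screw'' foliation whose holonomy around $c_{n-1}$ is a prescribed diffeomorphism $\psi \in \Diff(I_{n-1})$ (and which tapers smoothly back to the trivial foliation outside a slightly smaller sub-annulus, so as to extend smoothly to the rest of the foliation). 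The diffeomorphism $\psi$ is chosen to be the identity near $\partial I_{n-1}$, to be $C^n$-small enough that the resulting $\rho_n$ satisfies (2) on every generator in $\CS$, and to strictly translate points on some open subinterval of $I_{n-1}$ (for instance the time-one map of a smooth vector field which is positive on an open subinterval and vanishes to infinite order outside). Let $I_n \subset I_{n-1}$ be any open subinterval with $\overline{I_n} \subset I_{n-1}$, short enough that the iterates $\{\psi^k(I_n)\}_{k \in \BZ}$ are pairwise disjoint.

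This construction gives (1) by the choice of $I_n$, (2) by the $C^n$-smallness of $\psi$, and (5) because the perturbation is supported inside $U_{n-1}$ and hence is invisible to any point of $K_{n-1}$. For (3), the construction arranges that the restriction of $\rho_n$ to $\Gamma_{n-1}$ factors as
\begin{equation*}
\Gamma_{n-1} \xrightarrow{\langle\cdot,c_{n-1}\rangle} \BZ \xrightarrow{k \mapsto \psi^k} \Diff(I_{n-1}),
\end{equation*}
whose kernel is exactly $\Gamma_n$ by the defining property of $\Gamma_n$ in Proposition \ref{prop-filtration}; combined with the freeness of the $\langle\psi\rangle$-action on $I_n$ guaranteed by the disjointness of the $\psi^k(I_n)$, this gives $\Stab_{\rho_n}(x) = \Gamma_n$ for all $x \in I_n$. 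Property (4) then follows by combining (4) for $\rho_{n-1}$ (handling elements $\gamma \notin \Gamma_{n-1}$) with the disjointness of the $\psi^k(I_n)$ (handling elements $\gamma \in \Gamma_{n-1} \setminus \Gamma_n$, for which $\rho_n(\gamma)$ acts on $I_{n-1}$ as $\psi^{\langle\gamma,c_{n-1}\rangle}$).

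The main obstacle will be turning the local model on $N \times I_{n-1}$ into a globally smooth foliation of $\Sigma \times [0,1]$ whose holonomy genuinely lands in $\Diff([0,1])$ and not merely in $\Homeo([0,1])$. This requires patching the local perturbation $\Gamma_{n-1}$-equivariantly across the $\Gamma_{n-1}$-translates of a preimage of $N$ in $\BH^2$, and then descending to $\Sigma \times [0,1]$ while verifying $C^\infty$-compatibility both transversely (across $\partial I_{n-1}$) and along $\Sigma$ (outside $N$). The two ingredients that make this patching work are those emphasised in the introduction: the compactness of $N$ in $Y_{n-1}$, equivalently that the cohomology class dual to $c_{n-1}$ is compactly supported, which makes the patching finite-to-one near any given point; and the vanishing of $\psi - \mathrm{id}$ near $\partial I_{n-1}$, which allows the perturbation to be extended by the identity. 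This is the technical content of Section \ref{sec-meat}.
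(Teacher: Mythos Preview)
Your proposal is essentially the paper's proof: inductive construction, trivially foliated block $Y_{n-1}\times I_{n-1}$ with $Y_{n-1}=\BH^2/\Gamma_{n-1}$, perturbation supported in an annular neighbourhood of $c_{n-1}$, holonomy around $c_{n-1}$ given by a chosen diffeomorphism, and $I_n$ chosen so that its iterates are disjoint. The verification of (1)--(5) is also the same.

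Two small remarks. First, $K_0=\{0,1\}$, not $\emptyset$, since $I_0=(0,1)$. Second, your final paragraph overstates the difficulty. There is no need to lift to $\BH^2$ and patch $\Gamma_{n-1}$-equivariantly: the inductive hypotheses (3) and (4) at step $n-1$ already give a genuine embedding $\phi:Y_{n-1}\times I_{n-1}\hookrightarrow M_{n-1}$ onto a saturated open set (this is the paper's Lemma~\ref{lem-product}), so the perturbation is carried out directly on the quotient $Y_{n-1}$, supported in the compact annulus $N$, and then pushed into $M_{n-1}$ by $\phi$. Smoothness across $\partial N$ is arranged by a bump function in the $[-1,1]$ direction, and across $\partial I_{n-1}$ by taking $\psi\in\Diff(\overline{I_{n-1}})$ tangent to the identity at the endpoints; no further gluing is required. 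Finally, note that the paper takes $f(y)>y$ for \emph{all} $y\in I_{n-1}$ rather than only on a subinterval as you suggest; this is irrelevant for Proposition~\ref{prop-approx} itself but is what later makes $K_\infty$ countable in the proof of Theorem~\ref{main}.
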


We will prove Proposition \ref{prop-approx} in section \ref{sec-meat}. Assuming it for now, we conclude the proof of Theorem \ref{main}.

\begin{proof}[Proof of the injectivity part of Theorem \ref{main}]
First note that (2) in Proposition \ref{prop-approx} implies that the sequence $(\rho_k(\gamma))_k$ is Cauchy-sequence with respect to the $C^n$-norm for all $n$. It follows that the limit $\lim_{k\to\infty}\rho_k(\gamma)$ exists and that it is smooth. In other words we get that the homomorphism $\rho_k$ converge when $k\to\infty$ to a homomorphism
$$\rho:\pi_1(\Sigma,*)\to\Diff_\infty([0,1]),\ \ \rho(\gamma)\stackrel{\text{def}}=\lim_{k\to\infty}\rho_k(\gamma).$$

To prove Theorem \ref{main} we only show for the moment that $\rho$ is injective. To see that this is the case fix $n$ and let $x\in\D\overline{I_n}$. By (4) and (5) in the proposition we have that
$$x\in\D\overline I_n\subset K_n\subset K_{n+1}\subset K_{n+2}\subset\ldots$$
It thus follows from (5) that 
$$\rho_n(\gamma)(x)=\rho_{n+1}(\gamma)(x)=\rho_{n+2}(\gamma)(x)=\ldots.$$
for all $\gamma\in\pi_1(\Sigma)$. This implies that we also have
$$\rho(\gamma)(x)=\rho_n(\gamma)(x)$$
for all $\gamma$. Now, because of (4) we have that
$$\Stab_{\rho}(x)=\Stab_{\rho_n}(x)\subset\Gamma_n$$
Since $\cap\Gamma_n=\Id$, the claim follows.
\end{proof}

The fact that Proposition \ref{prop-approx} is true for every $\epsilon$ implies that we can choose the homomorphism $\rho$ as close (on the generators) as we want to the trivial homomorphism, which in return implies that the foliation given by such a homomorphism $\rho$ is a smooth pertubation of the trivial foliation, so give us Corollary \ref{kor-3mf foliation}.

\section{Review about foliation and holonomy}\label{sec-review}

It remains to prove Proposition \ref{prop-approx}. As we mentioned above, we will do that in section \ref{sec-meat} but first recall briefly the dictionary between actions on the real line and co-dimension one foliations. 

Suppose that we have a locally trivial line bundle $\pi:M\to\Sigma$, by which we mean that $M$ is a smooth fiber bundle with fibers diffeomorphic to $\BR$. If the total space $M$ is endowed with a codimension one smooth foliation $\CF$ transversal to the fibers of $\pi$, then the distribution $\{x\mapsto T_x\CF\}$ of planes tangent to $\CF$ is a smooth flat connection on $M$. If moreover, every leaf of $\CF$ is contained in a compact set of $M$ then the connection is complete, meaning that parallel transport exists for all times. Note for example that this condition is satisfied if there is a $\CF$-saturated compact $C\subset M$ such that the restriction of $\pi$ to any leaf of $\CF$ not contained in $C$ is a diffeomorphism onto $\Sigma$. Suppose that such a compact set $C$ exists. It follows thus that the holonomy representation
$$\rho_\CF:\pi_1(\Sigma,*)\to\Diff(\pi^{-1}(*))$$
is well-defined and that its image $\rho_\CF(\pi_1(\Sigma,*))$ fixes every point outside of $\pi^{-1}(*)\cap C$. In other words, if we have an identification of $\pi^{-1}(*)$ with $\BR$ in such a way that $\pi^{-1}(*)\cap C\subset[0,1]$, then the holonomy representation takes values in $\Diff([0,1])$.

We also note that if we fix $\gamma\in\pi_1(\Sigma,*)$, and if we are given a second foliation $\CF'$ with the same properties as above, and such that the distributions $\{x\mapsto T_x\CF\}$ and $\{x\mapsto T_x\CF'\}$ are $C^k$-close as sections of $\Lambda^2TM\to M$, then the images $\rho_{\CF'}(\gamma)$ and $\rho_\CF(\gamma)$ of $\gamma$ under the new and the old holonomy representations are also $C^k$-close to each other.

Turning now the tables, suppose that we are given a representation 
$$\sigma:\pi_1(\Sigma,*)\to\Diff([0,1])$$
and recall that we are identifying $\BH^2$ with the universal cover of $\Sigma$. Endowing thus $\BH^2$ with the deck-transformation action of the fundamental group $\pi_1(\Sigma,*)$ we consider the product action
\begin{equation}\label{eq-action}
\pi_1(\Sigma,*)\actson_\sigma \BH^2\times\BR,\ \ \gamma\cdot(z,t)\mapsto(\gamma(z),\sigma(\gamma)(t))
\end{equation}
whose second factor is given by $\sigma$. The action \eqref{eq-action} is discrete, meaning that that the quotient space $M_\sigma$ is a manifold. Note now that projection $\BH^2\times\BR\to\BH^2$ induces a map 
$$\pi:M_\sigma\to\BH^2/\pi_1(\Sigma,*)=\Sigma.$$
In fact, $\pi:M_\sigma\to\Sigma$ is a locally trivial smooth line bundle over $\Sigma$. Moreover, the total space $M_\sigma$ is endowed with a codimension one foliation $\CF_\sigma$. To see that this is the case note that the action \eqref{eq-action} preserves the foliation of $\BH^2\times\BR$ whose leaves are of the form $\BH^2\times\{t\}$. This foliation being preserved by \eqref{eq-action}, it descends to a foliation $\CF_\sigma$ of $M_\sigma$. 

These two processes we just described are inverse to each other. In fact, noting that the foliation $\CF_\sigma$ is transversal to the fibers of $\pi$, note also that the image of $\BH^2\times[0,1]$ under the quotient map $\BH^2\times\BR\to M_\sigma$ is a saturated compact set $C$ such that the restriction of $\pi$ to any fiber of $\CF$ not contained in $C$ is a diffeomorphism onto $\Sigma$. Note also that we have obvious identifications $\pi^{-1}(*)=*\times\BR=\BR$ and that under those identifications the set $C\cap\pi^{-1}(*)$ goes to the closed interval $[0,1]$. It follows that the holonomy representation $\rho_{\CF_\sigma}$ of $\CF_\sigma$ takes values in $\Diff([0,1])$. In fact $\rho_{\CF_\sigma}=\sigma$.

Conversely, if we start with a foliation $\CF$ of the total space of a line bundle $\pi:M\to\Sigma$ with the properties above, we consider the associated holonomy representation $\rho_\CF$, and then construct the associated line bundle $\pi:M_{\rho_\CF}\to\Sigma$ and foliation $\CF_{\rho_\CF}$ then we have a bundle isomorphism 
$$\xymatrix{M\ar[dr]\ar[rr] & & M_{\rho_\CF}\ar[dl] \\ & \Sigma & }$$
mapping $\CF$ to $\CF_{\rho_\CF}$.
\medskip

\begin{bem}
Recall that, if $z\in \pi^{-1}(*)$ then the restriction of $\pi$ to the leaf $\CF_z$ of $\CF$ containing $z$ is a covering of $ \Sigma$, whose fundamental group is the subgroup of $\pi_1(\Sigma,*)$ which is the stabilizer of $z$.
\end{bem}

\noindent{\bf Restricting to saturated sets.} Before concluding this digression, suppose that we have a line bundle $\pi:M\to\Sigma$, a codimension  one foliation $\CF$ on $M$ transversal to the fibers of $\pi$, and a compact set $C\subset M$ with the property that the restriction of $\pi$ to each leaf which is not contained in $C$ is a diffeomorphism. Letting as always $*\ni\Sigma$ be the base point, identify as above $\pi^{-1}(*)=\BR$ in such a way that $\pi^{-1}(*)\cap C=[0,1]$. Suppose now that $U$ is an open saturated subset of $C$ and suppose that there is a connected component $J$ of $\pi^{-1}(*)\cap U$ with the property that every leaf of $\CF$ contained in $U$ meets $J$. 

\begin{figure}[H]
\centering
\includegraphics[scale=0.5]{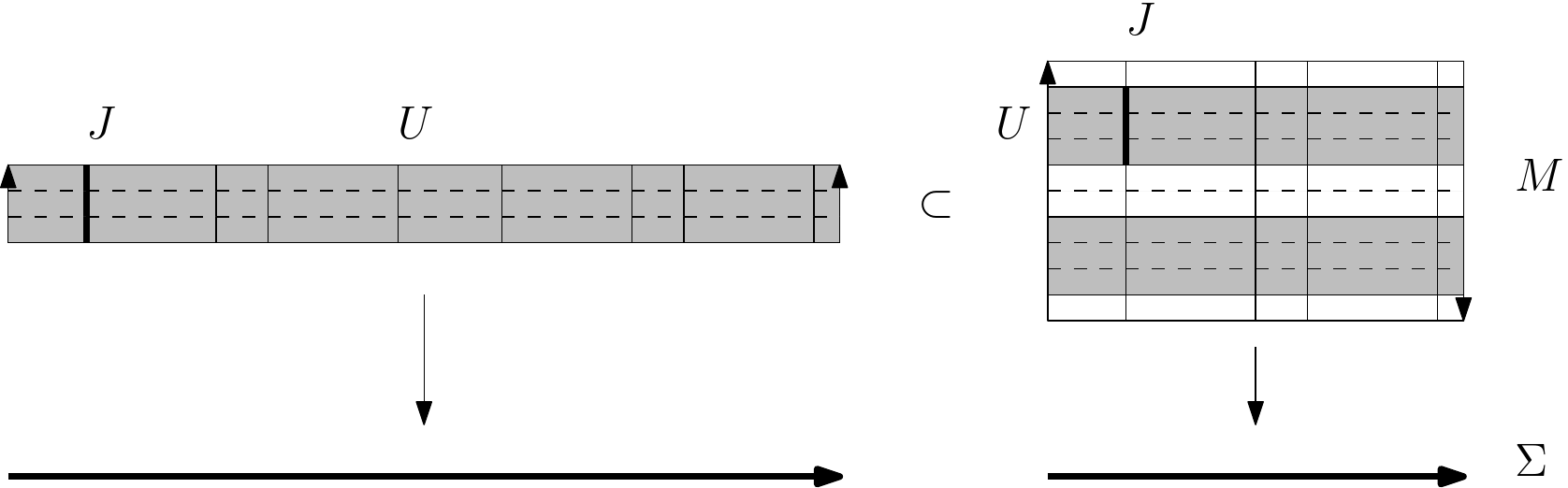}
\bigskip
\bigskip
\caption{Consider the Moebius band $M$ as the total space of a line bundle with base $\Sigma=\BS^1$, foliate $M$ by circles, let $U\subset M$ be a saturated annulus and restrict the foliation to $U$ - possibly a too simple example, but the only one we can possibly draw.}\label{moebius}
\end{figure}

\begin{bem}
Such a $J$ exists if $U$ is connected and it can be chosen to be any connected component of $\pi^{-1}(*)\cap U$. 
\end{bem}

Note that since $J$ is an interval, the relative homotopy group $\pi_1(U,J)$ is isomorphic to the fundamental group of $U$. Note also that, applying the long homotopy sequence to the bundle $U\to\Sigma$, we get that the restriction $\pi\vert_U$ of the projection $\pi$ to the open set $U$ induces an injective homomorphism
$$(\pi\vert_U)_*:\pi_1(U,J)\to\pi_1(\Sigma,*).$$
The restriction of $\pi$ to $U$ lifts to an interval bundle
$$\pi\vert_U:U\to\BH^2/(\pi\vert_U)_*(\pi_1(U,J))$$
and the holonomy of the induced foliation $\CF\vert U$ is related to the holonomy of the foliation $\CF$ as follows: 
$$\rho_\CF((\pi\vert_U)_*(\gamma))\vert_J=\rho_{\CF\vert U}(\gamma)\text{ for all }\gamma\in\pi_1(U,J).$$
In other words, as long as we restrict to the group $\pi_1(U,J)$ and we restrict the holonomies to $J$, then the holonomy of $M\to\Sigma$ and the holonomy of $U\to\Sigma$ agree.

\section{Proof of the key step}\label{sec-meat}

We are now ready to prove Proposition \ref{prop-approx}. We will construct the desired pairs ($\rho_n,I_n)$ by induction. Since we have already that $I_0=(0,1)$ and that $\rho_0$ is the trivial homomorphism, we can assume by induction, that the pair $(\rho_{n-1},I_{n-1})$ has been already constructed. To construct the next pair we will consider the foliation $\CF_{\rho_{n-1}}$ associated to $\rho_{n-1}$, perturb it to a new foliation, and take the associated holonomy.

Starting thus with the representation $\rho_{n-1}$ we consider as above the product action $\pi_1(\Sigma)\actson_{\rho_{n-1}}\BH^2\times\BR$ as in \eqref{eq-action} and let
$$M_{n-1}=M_{\rho_{n-1}}=\BH^2\times\BR/_{\{(x,t)\sim(\gamma(x),\rho_{n-1}(\gamma)(t))\vert \gamma\in\pi_1(\Sigma,*)\}}$$
be the total space of the associated line bundle $\pi:M_{n-1}\to\Sigma$. As before, we have a canonical identification $\pi^{-1}(*)=\BR$. Denote by $C\subset M_{n-1}$ the projection of $\BH^2\times[0,1]$ and note that $C$ is compact. Note also that $C\cap\pi^{-1}(*)=[0,1]$. Finally, let $\CF_{n-1}=\CF_{\rho_{n-1}}$ be the 2-dimensional foliation associated to $\rho_{n-1}$. 

Consider now the obvious embedding
\begin{equation}\label{eq-blabla}
\BH^2\times I_{n-1}\to\BH^2\times\BR,\ (z,t)\mapsto(z,t).
\end{equation}
By induction we know that $(\rho_{n-1},I_{n-1})$ satisfies (3) in Proposition \ref{prop-approx}, meaning for starters that $\Gamma_{n-1}$ acts trivially on $I_{n-1}$. It follows that the embedding \eqref{eq-blabla} descends to a well-defined map
\begin{equation}\label{eq-blabla2}
\phi:\BH^2/\Gamma_{n-1}\times I_{n-1}\to M_{n-1}.
\end{equation}
Since $(\rho_{n-1},I_{n-1})$ also satisfies (4) in Proposition \ref{prop-approx}, we also know that every element of $\pi_1(\Sigma,*)\setminus\Gamma_{n-1}$ moves $I_{n-1}$ off itself. It follows that \eqref{eq-blabla2} is an embedding. Since the image of \eqref{eq-blabla} is saturated under the foliation of $\BH^2\times\BR$ by planes $\BH^2\times\{t\}$ we derive that \eqref{eq-blabla2} maps leaves of the trivial foliation to leaves of $\CF_{n-1}$ and that its image $\phi(\BH^2/\Gamma_{n-1}\times I_{n-1})\subset M_{n-1}$ is saturated. 

Anyways, we summarize the situation at hand in the following lemma:

\begin{lem}\label{lem-product}
There is an embedding
$$\phi:\BH^2/\Gamma_{n-1}\times I_{n-1}\to C\subset M_{n-1}$$
with the following properties:
\begin{itemize}
\item[(a)] $\phi(\BH^2/\Gamma_{n-1}\times\{t\})$ is a leaf of $\CF_{n-1}$ for all $t\in I_{n-1}$.
\item[(b)] $\phi(\{z\}\times I_{n-1})$ is contained in a fiber of the projection $\pi: M_{n-1}\to\Sigma$ for all $z\in\BH^2/\Gamma_{n-1}$.
\item[(c)] $\pi(\phi(\{*\}\times I_{n-1}))=*$ and, after our earlier identification $\pi^{-1}(*)=\BR$, we have that in fact $\phi(\{*\}\times I_{n-1})=I_{n-1}$.
\item[(d)] Moreover, $\phi(\BH^2/\Gamma_{n-1}\times I_{n-1})\cap\pi^{-1}(*)=\cup_{\gamma\in\pi_1(\Sigma,*)}\rho_{n-1}(\gamma)(I_{n-1})$.
\qed
\end{itemize}
\end{lem}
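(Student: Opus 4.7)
The plan is to take $\phi$ to be the descent of the obvious inclusion $\BH^2\times I_{n-1}\hookrightarrow\BH^2\times\BR$ under the relevant group actions, exactly as foreshadowed in the paragraphs preceding the statement, and then to verify each of the four bullets by unwinding definitions. There is no deep obstacle: the only nontrivial checks are that the quotient is well-defined and injective, and both follow directly from the inductive hypotheses (3) and (4).

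First I would observe that the action $\pi_1(\Sigma,*)\actson_{\rho_{n-1}}\BH^2\times\BR$ defining $M_{n-1}$ restricts to an action of the subgroup $\Gamma_{n-1}$ on $\BH^2\times I_{n-1}$: indeed, hypothesis (3) says that $\rho_{n-1}(\gamma)$ fixes $I_{n-1}$ pointwise for every $\gamma\in\Gamma_{n-1}$, so the second factor is preserved (and the action there is trivial), while the first factor carries the free, properly discontinuous deck-transformation action. The quotient of $\BH^2\times I_{n-1}$ by this $\Gamma_{n-1}$-action is therefore $(\BH^2/\Gamma_{n-1})\times I_{n-1}$, and the inclusion into $\BH^2\times\BR$ descends to a well-defined smooth map $\phi:(\BH^2/\Gamma_{n-1})\times I_{n-1}\to M_{n-1}$ taking values in $C$ because $I_{n-1}\subset(0,1)\subset[0,1]$.

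For injectivity, suppose $\phi([z_1],t_1)=\phi([z_2],t_2)$. Then there exists $\gamma\in\pi_1(\Sigma,*)$ such that $z_2=\gamma\cdot z_1$ and $t_2=\rho_{n-1}(\gamma)(t_1)$; in particular $\rho_{n-1}(\gamma)(I_{n-1})\cap I_{n-1}\neq\emptyset$. Hypothesis (4) then forces $\gamma\in\Gamma_{n-1}$, and hypothesis (3) gives $t_2=t_1$ and $[z_1]=[z_2]$ in $\BH^2/\Gamma_{n-1}$. Being a local diffeomorphism is automatic because source and target are both obtained as quotients by free properly discontinuous smooth actions, so $\phi$ is an embedding.

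It remains to check properties (a)--(d), each of which is essentially tautological. For (a), the plane $\BH^2\times\{t\}$ is a leaf of the trivial foliation of $\BH^2\times\BR$ whose descent to $M_{n-1}$ is by construction $\CF_{n-1}$; since $\rho_{n-1}(\Gamma_{n-1})$ fixes $t$, its $\pi_1(\Sigma,*)$-orbit inside $\BH^2\times\BR$ intersects the slab $\BH^2\times I_{n-1}$ only in $\BH^2\times\{t\}$, so $\phi((\BH^2/\Gamma_{n-1})\times\{t\})$ is an entire leaf of $\CF_{n-1}$. For (b) and (c), the map $\pi\circ\phi$ is the projection onto $\BH^2/\Gamma_{n-1}$ followed by the cover $\BH^2/\Gamma_{n-1}\to\Sigma$, so each slice $\{z\}\times I_{n-1}$ is sent into a single fiber of $\pi$; taking $z=*$ and recalling the identification $\pi^{-1}(*)=*\times\BR=\BR$ sends $\{*\}\times I_{n-1}$ to $I_{n-1}\subset\BR$. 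For (d), a point of the image lies over $*\in\Sigma$ precisely when its first coordinate can be represented by some $\gamma\cdot *$ with $\gamma\in\pi_1(\Sigma,*)$, and the equivalence class $[\gamma\cdot *,t]=[*,\rho_{n-1}(\gamma^{-1})(t)]$ in $M_{n-1}$ then identifies the intersection with $\bigcup_{\gamma\in\pi_1(\Sigma,*)}\rho_{n-1}(\gamma)(I_{n-1})\subset\BR$, as required.
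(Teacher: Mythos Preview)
Your proof is correct and follows exactly the approach of the paper: the paper constructs $\phi$ in the paragraphs immediately preceding the lemma by descending the obvious inclusion $\BH^2\times I_{n-1}\hookrightarrow\BH^2\times\BR$, invokes the inductive hypothesis (3) for well-definedness and (4) for injectivity, and then simply states the lemma as a summary with a \qed. You have merely supplied the routine verifications of (a)--(d) that the paper leaves implicit.
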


The new homomorphism $\rho_n:\pi_1(\Sigma,*)\to\Diff([0,1])$ will be obtained as the holonomy of a foliation $\CF$ on $M_{n-1}$, transversal to the fibers of the projection $\pi:M_{n-1}\to\Sigma$, and obtained by perturbing $\CF_{n-1}$ within the image of the embedding $\phi$ from Lemma \ref{lem-product}. We note before going any further that it follows from Lemma \ref{lem-product} (d) that the holonomy $\rho_\CF$ of any such foliation satisfies condition (5) in Proposition \ref{prop-approx}. Using the induction hypothesis, the holonomy of any such $\CF$ also satisfies the (4) of proposition \ref{prop-approx} at the stage $n-1$, meaning:
\begin{itemize}
\item[(4')] For any $\gamma\notin\Gamma_{n-1}$ we have $\rho_{\CF}(\gamma)(I_{n-1})\cap I_{n-1}=\emptyset$.
\end{itemize}
Well, we start with the construction of a concrete perturbation $\CF$ of $\CF_{n-1}$. As we just mentioned, everything is going to happen within the image of $\phi$. 

We cannot recall too often that the restriction of $\CF_{n-1}$ to $\phi(\BH^2/\Gamma_{n-1}\times I_{n-1})\simeq \BH^2/\Gamma_{n-1}\times I_{n-1}$ is nothing but the foliation whose leaves are the copies $\BH^2/\Gamma_{n-1}\times\{t\}$ of the cover $\BH^2/\Gamma_{n-1}$ of $\Sigma$ associated to $\Gamma_{n-1}$. Recall also that by the very construction of the group $\Gamma_n$ in Proposition \ref{prop-filtration}, there is a simple closed curve $c\subset\BH^2/\Gamma_{n-1}$ such that 
$$\Gamma_n=\{\gamma\in\pi_1(\Sigma,*)\vert \langle\gamma,c\rangle=0\}.$$
We might assume that $c$ does not meet the base point $*$. Consider now an embedding
$$\psi:\BS^1\times[-1,1]\to\BH^2/\Gamma_{n-1}\setminus\{*\}$$
with $\psi(\BS^1\times\{0\})=c$ and the corresponding embedding
$$(\psi\times\Id):(\BS^1\times[-1,1])\times I_{n-1}\to\BH^2/\Gamma_{n-1}\times I_{n-1}$$
We are going to perturb the trivial foliation of $\BH^2/\Gamma_{n-1}\times I_{n-1}$ inside the image of $\psi\times\Id$. To do so we replace the trivial foliation of $\BS^1\times[-1,1]\times I_{n-1}$ by cylinders $\BS^1\times[-1,1]\times\{t\}$ by a new foliation by cylinders as suggested by Figure \ref{fig1}.

\begin{figure}[h]
\includegraphics[width=8cm, height=4.5cm]{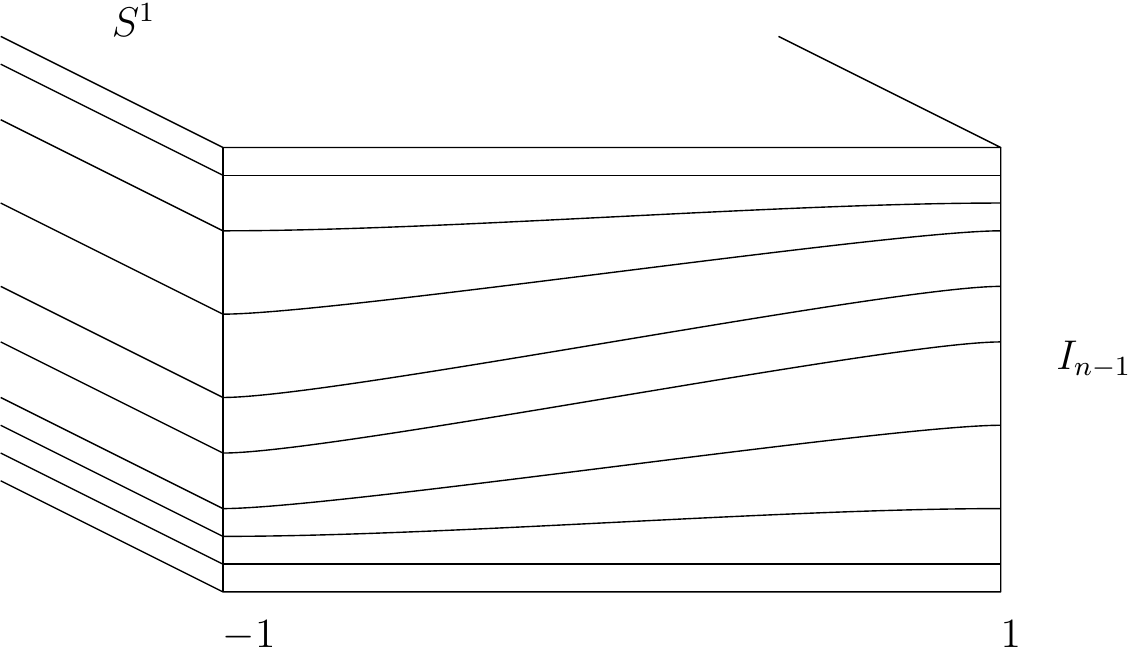}
\caption{}
\label{fig1}
\end{figure}

In more precise, but definitively more obscure terms, choose a diffeomorphism
\begin{equation}\label{eq-diffeo}
f\in\Diff(\overline{I_{n-1}})
\end{equation}
with $f(y)>y$ for all $y\in I_{n-1}$ and a smooth monotone function $g:[-1,1]\to[0,1]$ with $g(x)=0$ for $x$ near $-1$ and $g(x)=1$ for $x$ near $1$. Now consider the foliation $\CG$ of $\BS^1\times[-1,1]\times I_{n-1}$ whose leaf through $(\theta,y,-1)$ is the graph of the function
$$\BS^1\times[-1,1]\to I_{n-1},\ \ (\theta,x)\mapsto (1-g(x))y+g(x)f(y).$$
Now, the foliation $\CG$ agrees near $\BS^1\times\{-1,1\}\times I_{n-1}$ with the trivial foliation with leaves $\BS^1\times[-1,1]\times\{y\}$. In particular, we can extend the foliation $(\psi\times\Id)(\CG)$ on $(\psi\times\Id)(\BS^1\times[-1,1]\times I_{n-1})$ to a foliation $\CH$ on $\BH^2/\Gamma_{n-1}\times I_{n-1}$ by declaring that $\CH$ agrees with the trivial foliation outside of $(\psi\times\Id)(\CG)$.

We now let $\CF_n$ be the foliation of $M_{n-1}$ which agrees with $\CF_{n-1}$ on $M_{n-1}\setminus\phi(\BH^2/\Gamma_{n-1}\times I_{n-1})$ and with $\phi(\CH)$ on $\phi(\BH^2/\Gamma_{n-1}\times I_{n-1})$. Here $\phi$ is, as all along, the embedding \eqref{eq-blabla2}. Let also
$$\rho_n=\rho_{\CF_n}:\pi_1(\Sigma,*)\to\Diff[0,1]$$ 
be the holonomy of the $\CF_n$ and let $I_n\subset I_{n-1}$ be a maximal open subinterval such that $f(I_{n-1})\cap I_{n-1}=\emptyset$. We claim that, if we choose $f$ in \eqref{eq-diffeo} sufficiently close to the identity, then the pair $(\rho_n,I_n)$ satisfies the claims in Proposition \ref{prop-approx}. 

In fact, as we mentioned earlier, (5) is already satisfied. By choice of $I_{n-1}$, we are also satisfying (1). Then, if we choose $f$ in \eqref{eq-diffeo} $C^{n+10}$-close to $\Id$ then the associated foliation of $\CG$ of $\BS^1\times[-1,1]\times I_{n-1}$ is $C^n$-close to the trivial foliation by cylinders $\BS^1\times[-1,1]\times\{t\}$. This implies in turn that the foliation $\CH$ on $\BH^2/\Gamma_{n-1}\times I_{n-1}$ is again close to the trivial foliation and, surprise, surprise, this implies again that the perturbed foliation $\CF_n$ is close to the unperturbed foliation $\CF_{n-1}$. The off-shot of all this, is that as long as we choose $f$ sufficiently close to the identity, then (2) in Proposition \ref{prop-approx} is also satisfied.

To see that (3) and (4) are satisfied note that, by (4') above, it suffices to prove that they both hold if we restrict the holonomy representation $\rho_n=\rho_{\CF_n}$ to the subgroup $\Gamma_n$. Noting that $\Gamma_n$ is nothing but the image of $\pi_1(\phi(\BH^2/\Gamma_{n-1}\times I_{n-1}),\{*\}\times I_{n-1})$ under the restriction of the projection $\pi:M_{n-1}\to\Sigma$, we obtain from the discussion at the end of section \ref{sec-review} it follows that the holonomies of $\rho_n=\rho_{\CF_n}$ of $\CF_n$ and $\rho_\CH:\Gamma_n\to\Diff(\bar I_n)$ of the foliation $\CH$ on $\BH^2/\Gamma_{n-1}\times I_{n-1}$ are related by
$$\rho_n((\pi_*(\gamma))\vert_{I_n}=\rho_{\CH}(\gamma)$$
for $\gamma\in\Gamma_{n-1}=\pi_1(\phi(\BH^2/\Gamma_{n-1}\times I_{n-1}),\{*\}\times I_{n-1})$. The holonomy $\rho_\CH$ of the foliation $\CH$ is given by
\begin{equation}\label{eq-this is unreadable}
\rho_\CH(\gamma)=f^{\langle\gamma,c\rangle}
\end{equation}
which means that $\rho_\CH(\gamma)(I_n)\cap I_n=\emptyset$ unless $\langle\gamma,c\rangle=0$, that is unless $\gamma\in\Gamma_n$. It also implies that $\Ker(\rho_\CH)=\Gamma_n$, which means in particular that $\rho_\CH(\gamma)$ fixes $I_n$ pointwise if $\gamma\in\Gamma_n$. These observations, combined with \eqref{eq-this is unreadable} show that $\rho_n$ satisfies (3) and (4) from Proposition \ref{prop-approx}.  

This completes the induction step and thus the proof of Proposition \ref{prop-approx}.\qed
\medskip

\begin{proof}[End of the proof of Theorem \ref{main}]
It remains to show that the induced action on $[0,1]$ has only countably many points with non-trivial stabiliser, that it has no global fixed points in $(0,1)$, and that it is topologically transitive. 

Note that by points (3) and (5) of Proposition \ref{prop-approx}, the stabilizer of a point $x \in K_n \smallsetminus K_{n-1}$ is precisely $\Gamma_n$. Also, the stabilizer of any point $x \in [0,1] \smallsetminus K_\infty$ is trivial by point (4) of Proposition \ref{prop-approx} and the fact that $\bigcap \Gamma_n =\{ Id \}$. It follows that the points in $K_\infty= \cup_n K_n$ are the only points in $(0,1)$ with non-trivial stabiliser with respect to the action induced by the limiting representation. In our construction, $K_\infty$ is countable because we assume that $I_n$ is a maximal open subset of $I_{n-1}$ with $f(I_n)\cap I_n=\emptyset$. 

Noting now that $\Gamma_n$ is a proper subgroup of $\pi_1(\Sigma,*)$ for $n\ge 1$ we get that the only points on $[0,1]$ fixed by the whole group are those in $K_0=\{0,1\}$. In other words, the action on $(0,1)$ has no global fixed point.

Finally note that $\Gamma$ and $\Gamma/\Gamma_n$ act in the same way on the set $\pi_0([0,1] \smallsetminus K_n)$ of connected components of the complement of $K_n$. Since the latter group acts transitively by construction so does the former. Noting that the diameter of the connected components of $[0,1] \smallsetminus K_n$ tends to $0$, it follows that the orbit of every $x \notin K_\infty$ is dense. Hence, the action is topologically transitive.
\end{proof}

\end{document}